\numberwithin{equation}{section}
\theoremstyle{plain}
\newtheorem{proposition}{Proposition}
\newtheorem{lemma}[proposition]{Lemma}
\newcommand{\eqd}{\stackrel{d}{=}}
\newcommand{\diff}{d}
\newcommand{\defn}{\emph}
\newcommand{\pit}{{\textstyle\frac{\pi}{2}}}
\renewcommand{\Pr}{\operatorname{P}}
\newcommand{\corr}{\operatorname{corr}}
\renewcommand{\le}{\leqslant}
\renewcommand{\ge}{\geqslant}
\renewcommand{\le}{\leqslant}
\renewcommand{\ge}{\geqslant}
\newcommand{\bPhi}{\overline{\Phi}}
\newcommand{\eps}{\varepsilon}
\newcommand{\E}{\operatorname{E}}
\newcommand{\expec}{\operatorname{E}}
\begin{document}
\title{Tails of correlation mixtures of elliptical copulas}

\author[hm]{Hans Manner\corref{cor1}}
\ead{h.manner@maastrichtuniversity.nl} 
\address[hm]{Department of Quantitative Economics, Maastricht University, PO Box 616, NL-6200 MD, Maastricht, The Netherlands.}
\cortext[cor1]{Corresponding author.} 

\author[js]{Johan Segers}
\ead{johan.segers@uclouvain.be}
\address[js]{Institut de statistique, Universit\'e catholique de Louvain, B-1348 Louvain-la-Neuve, Belgium.}

\begin{frontmatter}
 
\begin{abstract}
Correlation mixtures of elliptical copulas arise when the correlation parameter is driven itself by a latent random process. For such copulas, both penultimate and asymptotic tail dependence are much larger than for ordinary elliptical copulas with the same unconditional correlation. Furthermore, for Gaussian and Student \textit{t}-copulas, tail dependence at sub-asymptotic levels is generally larger than in the limit, which can have serious consequences for estimation and evaluation of extreme risk. Finally, although correlation mixtures of Gaussian copulas inherit the property of asymptotic independence, at the same time they fall in the newly defined category of \emph{near asymptotic dependence}. The consequences of these findings for modeling are assessed by means of a simulation study and a case study involving financial time series. \bigskip
\end{abstract}

\begin{keyword}
Copula, tail dependence, penultimate tail dependence, stochastic correlation, Gaussian copula, \textit{t}-copula, stock market return, exchange rate return

\emph{JEL Classification:} C14
\end{keyword}

\end{frontmatter}

\section{Introduction}\label{Sec:intro}
It is a stylized fact that financial data such as stock or exchange rate returns exhibit a sizeable amount of tail dependence. For that reason, student \textit{t}-copulas with low degrees of freedom are often used to model dependence for such data. Furthermore, in many instances cross-sectional correlations in multivariate financial time series have been observed to vary over time \cite{E02, EHV94, LS95}. Consequently, in some recent studies the dependence between financial variables has been modeled via copulas whose parameters vary themselves according to a latent random process \cite{HM08, HR08, P06a}. The unconditional copula is then a mixture over the underlying parametric family according to a certain probability distribution on the parameter.


In this paper we focus on the specific case of tails of correlation mixtures of elliptical copulas, and more specifically of Gaussian and \textit{t}-copulas. This situation arises for instance in certain multivariate stochastic volatility models \cite{YM06} and in the Stochastic Correlation Auto-Regressive (SCAR) model of \cite{HM08}. In the latter model, the latent cross-sectional correlation $\rho_t$ at time $t$ is described by
\begin{align}
\label{SCAR}
  \gamma_t&=\alpha+\beta \gamma_{t-1} + \sigma \varepsilon_t, &
  \rho_t&= \frac{\exp(2\gamma_t)-1}{\exp(2\gamma_t)+1},
\end{align}
where $\varepsilon_t$, $t \in \mathbb{Z}$, are independent standard normal variables, $|\beta|<1$, and the inverse Fisher transform is chosen to keep $\rho_t$ in $(-1,1)$ at all times. This specification is intuitively reasonable, analytically tractable and has been found to provide an excellent fit to financial data when used in conjunction with a Gaussian copula. 
Correlation mixtures over elliptical copulas being not necessarily elliptical anymore, the setting in our paper is not covered by the literature on tail behavior of elliptical distributions \cite{AFG05, AJ07, H05, H08, H09}.


Let $C$ be a bivariate copula and let $(U, V)$ be a random pair with distribution function $C$. Dependence in the tails can be measured by
\[
  \lambda(u) = u^{-1} \, C(u, u) = \Pr(V < u \mid U < u), \qquad 0 < u \le 1,
\]
and its limit $\lambda = \lim_{u \downarrow 0} \lambda(u)$, the \defn{coefficient of tail dependence}. (Elliptical copulas being symmetric, it suffices to consider lower tail dependence.) We call $\lambda(u)$ the coefficient of \defn{penultimate} tail dependence and we will argue that $\lambda(u)$ may be more informative than $\lambda$ when their difference is large. The distinction between tail dependence at asymptotic and subasymptotic levels has already been made several times in the literature \cite{CHT99, FJS05, H00, LT96}. 

\begin{figure}
\begin{center}
\includegraphics[width=1\textwidth]{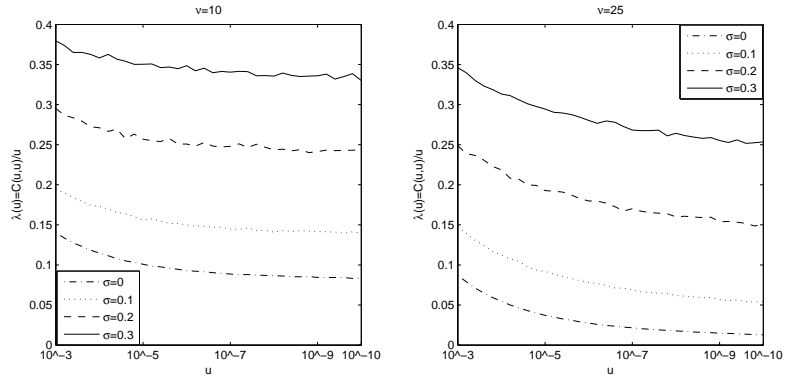}
\caption{\it Coefficient of penultimate tail dependence $\lambda(u) = u^{-1} \, C(u, u)$ for correlation mixtures of \textit{t}-copulas. Mind the logarithmic scale for $u$.}\label{fig1}
\end{center}
\end{figure}

For \textit{t}-copulas with correlations generated by \eqref{SCAR}, $\lambda(u)$ is plotted in Figure \ref{fig1} for values of $u$ that go deep into the joint tail. The unconditional correlation is equal to $\expec[\rho_t] = 0.5$ for all cases. The figure illustrates our two main findings quite well:
\begin{enumerate}
\item Allowing for random correlation greatly increases both $\lambda(u)$ and $\lambda$.
\item The speed of convergence of $\lambda(u)$ to $\lambda$ may be extremely slow, implying that tail dependence at penultimate levels may be significantly stronger than in the limit.
\end{enumerate}
Similar effects can be observed for correlation mixtures of Gaussian copulas, for which $\lambda(u)$ is much larger than its limit $0$ even at levels $u$ that are several orders of magnitude smaller than the ones relevant for practice. Our findings underline the importance of proper modeling of correlation dynamics and of taking threshold sensitivity of tail dependence measures into account. A surprising consequence is that Gaussian copulas with time-varying correlations do a great job in capturing tail dependence at subasymptotic thresholds.

The rest of the paper is structured as follows. In Section~\ref{Sec:mixcop} the impact of random variation in the correlation parameter on tail dependence measures of elliptical copulas is investigated. The (lack of) speed of convergence of $\lambda(u)$ to $\lambda$ for correlation mixtures of Gaussian and \textit{t}-copulas is studied in Section~\ref{Sec:PUTD}. In the Gaussian case, the rate of convergence to zero is so slow that it is actually appropriate to speak of \defn{near asymptotic dependence}. Consequences of our results for modeling are discussed in Section~\ref{Sec:CM} through a Monte Carlo study and a case study. Finally, Section~\ref{Sec:concl} concludes. Proofs are relegated to the Appendix.

\section{The impact of correlation dynamics}\label{Sec:mixcop}

Let $(X, Y)$ be a (standardized) \defn{bivariate elliptical random vector}, that is,
\begin{equation}
\label{E:XY}
  (X, Y) \eqd \bigl(S_1, \rho S_1 + (1 - \rho^2)^{1/2} S_2 \bigr)
\end{equation}
where $-1 \le \rho \le 1$ and where
\begin{equation}
\label{E:spherical}
  (S_1, S_2) \eqd (R \cos \Theta, R \sin \Theta),
\end{equation}
the random variables $R$ and $\Theta$ being independent with $R > 0$ and $\Theta$ uniformly distributed on $(-\pi, \pi)$; see for instance \cite{CHS81, FKN90, KBJ00}. Consider the distribution of the radius $R$ as fixed. Let $C_\rho$ be the copula of $(X, Y)$ seen as parameterized by the correlation parameter $\rho$, that is,
\[
  C_\rho(u, v) = \Pr(U \le u, V \le v), \qquad (u, v) \in [0, 1]^2,
\]
with $U = F(X)$ and $V = F(Y)$, where $F$ is the common marginal distribution function of $X$, $Y$, $S_1$ and $S_2$. Let 
\begin{equation}
\label{E:lambda_rho(u)}
  \lambda_\rho(u) = u^{-1} \, C_\rho(u, u) = \Pr(U < u \mid V < u), \qquad 0 < u \le 1,
\end{equation}
denote the coefficient of penultimate tail dependence and let $\lambda_\rho$ denote its limit as $u \downarrow 0$, provided it exists.

As in the SCAR model \eqref{SCAR}, suppose now that $\rho$ in \eqref{E:XY} is itself a (latent) random variable, independent of $S_1$ and $S_2$. The copula $C$ of $(X, Y)$ is then a $\rho$-mixture of the copulas $C_\rho$,
\begin{equation}
\label{E:Cmixture}
  C(u, v) = \int_{-1}^1 C_\rho(u, v) \, \mu(\diff\rho),
\end{equation}
with $\mu$ equal to the probability distribution of $\rho$ on $[-1,1]$. The penultimate coefficient of tail dependence of $C$ is simply given by
\begin{equation}
\label{E:PenLambda}
  \lambda(u)
  = u^{-1} \, C(u, u)
  = \int_{-1}^1 u^{-1} \, C_\rho(u, u) \, \mu(\diff\rho)
  = \int_{-1}^1 \lambda_\rho(u) \, \mu(\diff\rho),
  \qquad 0 < u \le 1.
\end{equation}

Let the average correlation parameter be denoted by
\[
  \bar{\rho} = \int_{-1}^1 \rho \, \mu(\diff\rho)
\]
Provided second moments exist, $\bar{\rho}$ is equal to the (unconditional) correlation between $X$ and $Y$. If for some $u \in (0, 1]$ the function $\rho \mapsto \lambda_\rho(u)$ is convex, then by Jensen's inequality,
\begin{equation}
\label{E:penlambda:Jensen}
  \lambda(u) \ge \lambda_{\bar\rho}(u)
\end{equation}
whatever the mixing distribution $\mu$. As a consequence, when correlations are themselves driven by a latent random process, tail dependence may be larger than what one may expect. Taking limits as $u \downarrow 0$ shows that this reasoning applies to the coefficient of tail dependence $\lambda = \int_{-1}^1 \lambda_\rho \, \mu(\diff\rho)$ as well.

It remains to investigate the convexity of $\lambda_\rho(u)$ in $\rho$. 

\begin{proposition}\label{P:penlambda:convex}
Let $\lambda_\rho(u)$ be as in \eqref{E:lambda_rho(u)}. For every $u \in (0, 1/2]$, the function $\rho \mapsto \lambda_\rho(u)$ is convex in $\rho \in [0, 1]$.
\end{proposition}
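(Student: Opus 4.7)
The plan is to use the polar representation of an elliptical distribution to reduce the problem to a one-line geometric calculation on the unit circle, and then to exploit the concavity of $\arccos$ on $[0,1]$. Starting from \eqref{E:XY}--\eqref{E:spherical}, I would write $\rho = \cos\alpha$ with $\alpha := \arccos(\rho) \in [0, \pi]$ and use the identity $\rho \cos\Theta + \sqrt{1 - \rho^2}\,\sin\Theta = \cos(\Theta - \alpha)$, yielding
\[
  (X, Y) \eqd \bigl( R\cos\Theta, \, R\cos(\Theta - \alpha)\bigr).
\]
Since $X$ is symmetric about $0$, the threshold $x := F^{-1}(u)$ satisfies $x \le 0$ for $u \in (0, 1/2]$. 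Conditioning on $R = r$, the event $\{X \le x,\ Y \le x\}$ becomes $\{\cos\Theta \le x/r,\ \cos(\Theta - \alpha) \le x/r\}$, that is, the event that $\Theta$ lies in the intersection of two arcs of equal half-width on the circle.

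For $r \ge |x|$ (below which the contribution is $0$), the arc $\{\theta \in (-\pi, \pi] : \cos\theta \le x/r\}$ is centered at $\pi$ with half-width $\beta_r := \pi - \arccos(x/r) \in [0, \pi/2]$, and the second arc is its rotation by $\alpha$. Since each arc has length at most $\pi$---this is precisely where the hypothesis $u \le 1/2$ enters---and the centers have circular distance $\alpha \in [0, \pi]$, the intersection has angular length $(2\beta_r - \alpha)_+ = \bigl(2\beta_r - \arccos(\rho)\bigr)_+$. Consequently,
\[
  C_\rho(u, u) \;=\; \int_{|x|}^{\infty} \frac{\bigl(2\beta_r - \arccos(\rho)\bigr)_+}{2\pi} \, \diff F_R(r).
\]

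For the convexity step, I would observe that $\arccos$ is concave on $[0, 1]$, since its second derivative equals $-\rho (1-\rho^2)^{-3/2} \le 0$ there. Hence $\rho \mapsto 2\beta_r - \arccos(\rho)$ is convex on $[0,1]$ for each $r$; composing with the nondecreasing convex map $y \mapsto y_+$ preserves convexity; and integrating a family of convex functions against a positive measure preserves convexity as well. This yields convexity of $C_\rho(u,u)$, and hence of $\lambda_\rho(u) = u^{-1} C_\rho(u, u)$, in $\rho$ on $[0, 1]$. The main obstacle is really a careful accounting of the two restrictions in the statement: the condition $u \le 1/2$ forces each arc to have length at most $\pi$, without which the arc-overlap formula would no longer be the simple truncated-linear one used above; and the restriction $\rho \ge 0$ is essential, because $\arccos$ is convex on $[-1, 0]$, so on that subinterval the same argument would deliver concavity of $\lambda_\rho(u)$ instead of convexity.
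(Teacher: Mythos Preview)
Your proof is correct and follows essentially the same route as the paper's: both reduce to a polar representation, compute the conditional probability as the normalized length of an overlap of two circular arcs, and finish with the convexity of $\arcsin$ (equivalently, the concavity of $\arccos$) on $[0,1]$. The only cosmetic differences are that you parameterize via $\rho=\cos\alpha$ and work directly in the lower tail, whereas the paper uses $\rho=\sin\gamma$ and passes to the upper tail by symmetry; since $\arcsin\rho=\pit-\arccos\rho$, the two arguments are the same.
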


Graphs of $\lambda_{\rho}(u)$ for fixed $u$ (not shown) suggest that the restriction to $\rho \in [0, 1]$ cannot be avoided. Still, under some mild conditions on the distribution of $R$, the convexity is
actually true on $\rho \in [-1, 1]$ for all sufficiently small $u > 0$.

\begin{proposition}\label{P:penlambda:convex2}
Suppose that the distribution function $F_R$ of $R$ in \eqref{E:spherical} has unbounded support and is absolutely continuous with density $f_R$. If
\begin{equation}
\label{E:VonMises}
  \liminf_{r \to \infty} \frac{r \, f_R(r)}{\overline{F}_R(r)} > 1,
\end{equation}
then there exists $u_0 \in (0, 1/2)$ such that the function $\rho \mapsto \lambda_\rho(u)$ is convex in $\rho \in [-1, 1]$ for every $u \in (0, u_0)$. If the limit $\lambda_\rho = \lim_{u \downarrow 0} \lambda_\rho(u)$ exists, then $\lambda_\rho$ is convex in $\rho \in [-1, 1]$.
\end{proposition}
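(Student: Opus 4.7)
The plan is to derive a closed form for $d^{2}\lambda_\rho(u)/d\rho^{2}$ from the polar representation of the elliptical pair and then to recognise that the von Mises condition \eqref{E:VonMises} exactly supplies the sign information needed in the regime $\rho < 0$. The starting observation is that for $u \le 1/2$ the threshold $a := -F^{-1}(u)$ is positive, since the elliptical marginal $F$ is symmetric around $0$.

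With $\phi = \arccos\rho \in [0, \pi]$, I would substitute $(S_1, S_2) = (R\cos\Theta, R\sin\Theta)$ and use the identity $\rho S_1 + (1-\rho^{2})^{1/2} S_2 = R\cos(\Theta - \phi)$ to recast the event $\{X \le -a,\, Y \le -a\}$ as $\Theta$ lying in the intersection of two arcs of angular length $2\arccos(a/r)$ shifted by $\phi$. This intersection has positive length iff $r \ge r_\phi := a/\cos(\phi/2)$, so integrating out the uniform $\Theta$ yields
\[
  C_\rho(u, u) \;=\; \frac{1}{2\pi}\int_{r_\phi}^{\infty}\!\bigl(2\arccos(a/r) - \phi\bigr)\, f_R(r)\,\diff r.
\]
A first differentiation in $\phi$ is clean because the integrand vanishes at $r = r_\phi$, so the Leibniz boundary term drops out and $(d/d\phi)\,C_\rho(u,u) = -\overline{F}_R(r_\phi)/(2\pi)$. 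Passing to $\rho$ via $d\phi/d\rho = -1/\sin\phi$ and differentiating once more, the identity $r_\phi'\,\sin\phi = r_\phi \sin^{2}(\phi/2)$ combined with routine trigonometric simplification should produce
\[
  \frac{d^{2} \lambda_\rho(u)}{d\rho^{2}}
  \;=\; \frac{1}{2\pi u \sin^{3}\phi}\Bigl[\,r_\phi\, f_R(r_\phi)\,\sin^{2}(\phi/2) + \overline{F}_R(r_\phi)\cos\phi\,\Bigr].
\]

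Once this formula is in hand, the analysis splits naturally by the sign of $\cos\phi$. For $\phi \in [0, \pi/2]$ (that is $\rho \ge 0$) both terms in the bracket are nonnegative and Proposition \ref{P:penlambda:convex} is recovered. For $\phi \in [\pi/2, \pi]$ (i.e.\ $\rho \in [-1, 0]$), dividing by $\overline{F}_R(r_\phi)\sin^{2}(\phi/2)$ and using $\sin^{2}(\phi/2) = (1-\cos\phi)/2$ reduces convexity to
\[
  \frac{r_\phi\, f_R(r_\phi)}{\overline{F}_R(r_\phi)} \;\ge\; \frac{-2\cos\phi}{1 - \cos\phi},
\]
whose right-hand side is monotone on $[\pi/2, \pi)$ with values in $[0, 1)$ and tends to $1$ as $\phi \uparrow \pi$. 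The hypothesis \eqref{E:VonMises} yields $r_0$ and $\delta > 0$ with $r f_R(r)/\overline{F}_R(r) \ge 1 + \delta$ for every $r \ge r_0$. Since $r_\phi \ge a\sqrt{2}$ whenever $\phi \ge \pi/2$, and $a \to \infty$ as $u \downarrow 0$ (because $F_R$, hence $F$, has unbounded support), I would take $u_0 \in (0, 1/2)$ small enough that $|F^{-1}(u_0)|\sqrt{2} \ge r_0$; this forces the displayed inequality for every $u \in (0, u_0)$ and every $\phi \in [\pi/2, \pi)$. Convexity at the endpoints $\rho = \pm 1$ then follows from continuity of $\rho \mapsto \lambda_\rho(u)$, and the claim about the limit is immediate because pointwise limits of convex functions are convex.

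The main obstacle will be bookkeeping rather than conceptual: verifying that the Leibniz boundary term at $r_\phi$ cancels because of the precise definition of $r_\phi$, and carrying out the trigonometric clean-up so that $d^{2}\lambda_\rho(u)/d\rho^{2}$ takes the displayed exploitable shape. Once the formula is in that form, it is pleasing that the critical threshold $1$ appearing on the right-hand side of the reduced inequality matches exactly the threshold built into \eqref{E:VonMises}, so the von Mises condition is used in essentially optimal form.
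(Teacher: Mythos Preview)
Your proof is correct and follows essentially the same route as the paper's: an explicit computation of the second derivative of $C_\rho(u,u)$ via the polar representation, identification of the threshold radius $r_\phi = a/\cos(\phi/2) = \sqrt{2}\,a\,(1+\rho)^{-1/2}$, and then the observation that the von Mises condition \eqref{E:VonMises} forces the second derivative to be nonnegative once the threshold is large enough. The only cosmetic difference is that you parametrise by $\phi = \arccos\rho$ whereas the paper uses $\gamma = \arcsin\rho = \pi/2 - \phi$; the resulting formulas are equivalent, and your reduced inequality $r_\phi f_R(r_\phi)/\overline{F}_R(r_\phi) \ge -2\cos\phi/(1-\cos\phi)$ is exactly the paper's bracket condition $\tfrac{1}{2}\nu(r(\rho)) + \rho/(1-\rho) \ge 0$ rewritten.
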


Condition~\eqref{E:VonMises} is verified as soon as the radial density function $f_R$ is regularly varying at infinity of index $-\alpha-1$ for some $\alpha > 1$, in which case the $\liminf$ is actually a limit and is equal to $\alpha$. For the bivariate \textit{t}-distribution with $\nu > 0$ degrees of freedom, the radial density is given by $ f_R(r) = r \, ( 1 + r^2 / \nu)^{-\nu/2-1}$ for $r > 0$. Since $f_R(r) = (c + o(1)) r^{-\nu-1}$ as $r \to \infty$ for some constant $c > 0$, we find $r \, f_R(r) / \overline{F}_R(r) \to \nu$ as $r \to \infty$. Hence, condition~\eqref{E:VonMises} is satisfied if $\nu > 1$. For the bivariate Gaussian distribution, the radial density is given by $f_R(r) = r \, \exp(-r^2/2)$ for $r > 0$, which implies $r \, f_R(r) / \overline{F}_R(r) \to \infty$ as $r \to \infty$, so that \eqref{E:VonMises} is satisfied again.

\begin{figure}
\begin{center}
\includegraphics[width=0.7\textwidth]{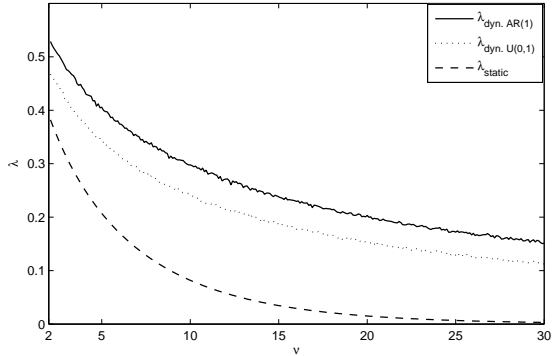}
\caption{\it Coefficients of tail dependence of static and dynamic \textit{t}-copulas.}\label{tcops}
\end{center}
\end{figure}

The difference between $\lambda(u)$ and $\lambda_{\bar\rho}(u)$ can be seen in Figure~\ref{fig1}, whereas Figure~\ref{tcops} illustrates it for the coefficient of tail dependence $\lambda$. For the (static) \textit{t}-copula with $\nu > 0$ degrees of freedom and correlation parameter $\rho \in (-1, 1)$, the coefficient of tail dependence is given in \cite{DMcN05, EMcNS02, S02} by 
\begin{equation}
\label{E:lambda}
  \lambda_{\nu, \rho} = 2 \, t_{\nu+1} \bigl( - \sqrt{\nu+1} \sqrt{1-\rho} / \sqrt{1+\rho} \bigr),
\end{equation}
where $t_\mu$ denotes the cumulative distribution function of the \textit{t}-distribution with $\mu > 0$ degrees of freedom. In Figure~\ref{tcops}, Student \textit{t}-copulas are considered with stochastic and static correlations such that both have an unconditional correlation coefficient of $\bar\rho = 0.5$. The degrees-of-freedom parameter $\nu$ varies from $2$ to $30$. Two different correlation dynamics are considered: the SCAR process in \eqref{SCAR}, and the uniform distribution on $(0, 1)$. The dynamic models clearly lead to significantly higher tail dependence than the static ones. 


\section{Penultimate tail dependence}\label{Sec:PUTD}

So far we have shown that correlation mixtures over elliptical copulas exhibit stronger tail dependence than static ones with the same unconditional correlation. For practical purposes, however, the penultimate coefficient $\lambda(u)$ is more suited to assess the risks of joint extremes than its limit $\lambda$, see for instance \cite{CHT99}. In this section we study the rate of convergence of $\lambda(u)$ to $\lambda$ for correlation mixtures of the Student \textit{t}-copula (Section~\ref{Sec:tcop}) and the Gaussian copula (Section~\ref{Sec:normcop}).

\subsection{Correlation mixtures of \textit{t}-copulas}
\label{Sec:tcop} 

Let $\nu > 0$ and $-1 < \rho < 1$. Let $Z_1, Z_2, S$ be independent random variables such that the
$Z_i$ are standard normal and $\nu S^2$ has a chi-square distribution with $\nu$
degrees of freedom. Put
\begin{align*}
  X &= S^{-1} \, Z_1, & Y = S^{-1} \, \bigl( \rho Z_1 + {\textstyle\sqrt{1 - \rho^2}} Z_2 \bigr).
\end{align*}
The distribution of the random vector $(X, Y)$ is bivariate
\textit{t} with $\nu$ degrees of freedom and correlation parameter
$\rho$. Its copula, $C$, is the bivariate \textit{t}-copula with
parameters $\nu$ and $\rho$.

Now suppose as in Section~\ref{Sec:mixcop} that $\rho$ is itself a random variable with range in $(-1, 1)$ and independent of $Z_1, Z_2, S$. The unconditional
copula, $C$, of $(X, Y)$ is then a correlation mixture of bivariate \textit{t}-copulas with fixed degrees-of-freedom parameter $\nu$.

For nonrandom $\rho \in (-1, 1)$, the coefficient of tail dependence $\lambda_{\nu, \rho}$ is given in \eqref{E:lambda}. For general, random
$\rho$, we have $\lambda = \expec[\lambda_{\nu, \rho}]$, the expectation being with respect to $\rho$.

Figure~\ref{fig1} suggests that the rate of convergence of $\lambda(u) = u^{-1} \, C(u, u)$ to $\lambda$ may be slow, especially when $\nu$ is large. This is confirmed in the following result. For real $x$, let $x_+ = \max(x, 0)$ be its positive part.

\begin{proposition}\label{P:PUT}
Let $C$ be a correlation mixture of bivariate \textit{t}-copulas with degrees-of-freedom parameter $\nu > 0$. We have
\[
  \lambda(u) = u^{-1} \, C(u, u) = \lambda + \gamma \, u^{2/\nu} + o(u^{2/\nu}), \qquad u \downarrow 0,
\]
where $\gamma$ is a positive constant depending on $\nu$ and the distribution of $\rho$ given in \eqref{E:bivtmix:gamma} below.
\end{proposition}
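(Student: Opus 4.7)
The plan is first to establish the expansion $\lambda_\rho(u) = \lambda_\rho + \gamma(\rho) u^{2/\nu} + o(u^{2/\nu})$ for a fixed $\rho \in (-1,1)$, then to integrate in $\rho$. The conditional law of $Y$ given $X = x$ for a bivariate $t_\nu$ is a shifted, scaled $t_{\nu+1}$, so setting $q_u = F_\nu^{-1}(u)$ and changing variables $s = ut$,
\[
\lambda_\rho(u) \;=\; \int_0^1 t_{\nu+1}\bigl(A_\rho(u, ut)\bigr)\, dt, \qquad
A_\rho(u,s) = (q_u - \rho\, q_s)\sqrt{\tfrac{\nu+1}{(\nu + q_s^2)(1-\rho^2)}}.
\]

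The tail of $t_\nu$ admits the polynomial expansion $\overline{F}_\nu(x) = \kappa_\nu x^{-\nu}\bigl(1 - \tfrac{\nu^2(\nu+1)}{2(\nu+2)} x^{-2} + O(x^{-4})\bigr)$ as $x \to \infty$, obtained by developing $(1+x^2/\nu)^{-(\nu+1)/2}$ and integrating; inversion then yields $q_u = -(\kappa_\nu/u)^{1/\nu}\bigl(1 + d_\nu u^{2/\nu} + o(u^{2/\nu})\bigr)$ with $d_\nu = -\tfrac{\nu(\nu+1)}{2(\nu+2)}\kappa_\nu^{-2/\nu}$. Substituting into $A_\rho(u,ut)$ and collecting the $u^{2/\nu}$ corrections gives, uniformly for $t \in (0,1]$,
\[
A_\rho(u,ut) = A_0(\rho,t) + u^{2/\nu} B(\rho,t) + o(u^{2/\nu}), \quad A_0(\rho,t) = \sqrt{\tfrac{\nu+1}{1-\rho^2}}(\rho - t^{1/\nu}),
\]
\[
B(\rho,t) = \sqrt{\tfrac{\nu+1}{1-\rho^2}}\,\tfrac{\nu\kappa_\nu^{-2/\nu}}{2(\nu+2)}\, t^{1/\nu}\bigl[(\nu+1) - (\nu+2)\rho\, t^{1/\nu} + t^{2/\nu}\bigr].
\]
A first-order Taylor expansion of $t_{\nu+1}$ around $A_0(\rho,t)$ and integration in $t$ then give $\lambda_\rho(u) = \lambda_\rho + u^{2/\nu}\gamma(\rho) + o(u^{2/\nu})$ with $\gamma(\rho) = \int_0^1 f_{\nu+1}(A_0(\rho,t))\, B(\rho,t)\, dt$, which is the formula referenced in the statement.

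The mixture expansion follows by a second application of dominated convergence, integrating against $\mu(\diff\rho)$; a uniform-in-$(\rho,t)$ bound on the $u^{2/\nu}$-scaled remainder over compact subsets of $(-1,1)$ suffices and produces $\gamma = \int_{-1}^1 \gamma(\rho)\, \mu(\diff\rho)$. Positivity of $\gamma$ reduces to positivity of $B(\rho,t)$ on $(0,1]$, which in turn is equivalent to positivity of $Q(x) := x^2 - (\nu+2)\rho x + (\nu+1)$ for $x = t^{1/\nu} \in (0,1]$. Inspecting $Q$: at the endpoints, $Q(0) = \nu+1 > 0$ and $Q(1) = (\nu+2)(1-\rho) > 0$; and at its critical point $x^* = (\nu+2)\rho/2$, which lies in $(0,1)$ only when $0 < \rho < 2/(\nu+2)$, one computes $Q(x^*) = (\nu+1) - (\nu+2)^2\rho^2/4 \geq \nu > 0$. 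Hence $Q > 0$ on $[0,1]$ for every $\rho \in (-1,1)$, so $\gamma(\rho) > 0$ pointwise and $\gamma > 0$.

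The main obstacle is the algebraic bookkeeping required to extract $B(\rho,t)$ cleanly; once the formula is in hand, the dominated convergence arguments and the positivity check are essentially elementary. A secondary delicacy is that $q_{ut}$ diverges faster than $q_u$ as $t \downarrow 0$, but the leading-order cancellation of $q_{ut}$ between numerator and denominator of $A_\rho(u,ut)$ keeps the integrand bounded there, so no issue arises in either the $t$- or the $\rho$-integration.
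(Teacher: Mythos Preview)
Your approach is correct in outline and genuinely different from the paper's. The paper does not expand $\lambda_\rho(u)$ for fixed $\rho$ and then integrate; instead it works directly with the mixture via the representation
\[
  \Pr(X > x,\, Y > x) = \Pr(W_+ S^{-1} > x), \qquad W = \min\bigl(Z_1,\, \rho Z_1 + \sqrt{1-\rho^2}\, Z_2\bigr),
\]
where $\rho$ is already random inside $W$. A two-term expansion of $\Pr(S^{-1} > z)$ (Lemma~\ref{L:S}) then gives, after conditioning on $W$, expansions for the joint and marginal tails in terms of the moments $\E[W_+^\nu]$, $\E[W_+^{\nu+2}]$, $\E[Z_+^\nu]$, $\E[Z_+^{\nu+2}]$. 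Taking the ratio and inverting the marginal quantile yields $\gamma$ in the closed form
\[
  \gamma = \bigl(a_\nu\,\E[Z_+^\nu]\bigr)^{-2/\nu}\, b_\nu\,(\nu+1)\,\E[\lambda_{\nu,\rho} - \lambda_{\nu+2,\rho}],
\]
and positivity of $\gamma$ follows in one line from the monotonicity of $\lambda_{\nu,\rho}$ in $\nu$. So the paper's route buys a shorter positivity argument and sidesteps all uniformity issues in $\rho$: since $W_+ \le |Z_1|$ regardless of $\rho$, the moment bounds needed for the tail expansion hold automatically, even if $\mu$ puts mass arbitrarily close to $\pm 1$.

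Your route, via the conditional $t_{\nu+1}$ law and the integral representation of $\lambda_\rho(u)$, is more explicit about the mechanism and produces an integral formula for $\gamma(\rho)$ that makes the dependence on $\rho$ transparent. Two points to tighten. First, your claim that your integral formula for $\gamma(\rho)$ ``is the formula referenced in the statement'' is not quite right: the paper's \eqref{E:bivtmix:gamma} is the expression above in terms of $\E[\lambda_{\nu,\rho} - \lambda_{\nu+2,\rho}]$, so you have derived an equivalent but formally different representation of the same constant. Second, your appeal to dominated convergence over $\mu(\diff\rho)$ using ``uniform bounds on compact subsets of $(-1,1)$'' is not sufficient as stated if $\mu$ is supported on all of $(-1,1)$; you need either an explicit integrable envelope valid up to $\rho = \pm 1$ (which does exist, since $f_{\nu+1}(A_0)B$ decays like $(1-\rho^2)^{(\nu+1)/2}$ there, and similarly for the remainder) or a truncation argument. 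The paper's $W_+$ device avoids this bookkeeping entirely.
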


The rate of convergence of $\lambda(u)$ to its limit $\lambda$ is of the order $O(u^{2/\nu})$. The larger $\nu$, the slower this rate. Since $\gamma$ is positive, $\lambda(u)$ may therefore be (much) larger than $\lambda$. As a result, at finite thresholds, the tail may look much heavier than it is in the limit. This in turn may cause estimators of $\nu$ to be negatively biased.

\subsection{Correlation mixtures of Gaussian copulas}
\label{Sec:normcop}

Let $X, Z, \rho$ be independent random variables, $X$ and $Z$ being standard normal and $\rho$ taking values in $(-1, 1)$. Put
\begin{equation}
\label{E:Gaussian:repr}
  Y = \rho X + (1 - \rho^2)^{1/2} Z.
\end{equation}
Conditionally on $\rho$, the distribution of $(X, Y)$ is bivariate normal with zero means, unit variances, and correlation $\rho$. The unconditional correlation of $X$ and $Y$ is $\corr(X, Y) = \expec [\rho] = \bar{\rho}$, and the copula of $(X, Y)$ is a correlation mixture of Gaussian copulas:
\begin{equation}
\label{E:CR}
  C(u, v) = \expec [C_\rho(u, v)], \qquad (u, v) \in [0, 1]^2,
\end{equation}
the expectation being with respect to $\rho$, and with $C_\rho$ denoting the bivariate Gaussian copula with correlation $\rho$. As before, put $\lambda(u) = u^{-1} \, C(u, u)$ and $\lambda_\rho(u) = u^{-1} \, C_\rho(u, u)$.

By the assumption that $\rho < 1$ almost surely and since the coefficient of tail dependence of $C_\rho$ is equal to $\lambda_\rho = \lim_{u \downarrow 0} \lambda_\rho(u) = 0$, the coefficient of tail dependence of $C$ is $\lambda = \expec[\lambda_\rho] = 0$ too. So just like Gaussian copulas, correlation mixtures of Gaussian copulas have asymptotically independent tails.

To measure the degree of tail association in case of asymptotic independence, Ledford and Tawn \cite{LT96} introduced the coefficient
\begin{equation}
\label{E:eta}
  \eta = \lim_{u \downarrow 0} \frac{\log u}{\log C(u, u)} \in [0, 1],
\end{equation}
the existence of the limit being an assumption. In this case, one can write
\begin{equation}
\label{E:LT}
  C(u, u) = u^{1/\eta} \, L(u),
\end{equation}
the function $L$ satisfying $\lim_{u \downarrow 0} \log L(u) / \log u = 0$, that is, for all $\eps > 0$ we have
\begin{align*}
  \lim_{u \downarrow 0} u^\eps \, L(u) &= 0, & \lim_{u \downarrow 0} u^{-\eps} \, L(u) &= \infty.
\end{align*}
Typically, the function $L$ is slowly varying at zero: $\lim_{u \downarrow 0} L(ux) / L(u) = 1$ for all $x > 0$. The coefficient $\eta$ is related to the coefficient $\bar{\chi}$ in \cite{CHT99} through $\bar{\chi} = 2 \eta - 1$.

The pair of coefficients $(\lambda, \eta)$ measures the amount of tail dependence across the classes of asymptotic dependence and independence. The following two cases are most common:
\begin{enumerate}
\item Asymptotic dependence: If $\lambda > 0$, then necessarily $\eta = 1$.
\item Asymptotic independence: If $\eta < 1$, then $\lambda = 0$.
\end{enumerate}

For the bivariate Gaussian copula with correlation $\rho < 1$, for instance, we have $\lambda = 0$ and $\eta = (\rho + 1)/2$, so $\bar{\chi} = \rho$. One may then be tempted to believe that for correlation mixtures of Gaussian copulas the coefficient $\bar{\chi}$ is equal to $\bar{\rho}$. However, this is false. Instead, a new situation is encountered, one that is in between the two cases described in the preceding paragraph.

\begin{proposition}
\label{P:CR} Let $C$ be as in \eqref{E:CR}. If $\rho < 1$ almost surely but the upper endpoint of the distribution of $\rho$ is equal to $1$, then for every $\eps > 0$,
\[
  \lim_{u \downarrow 0} u^{-\eps} \, \lambda(u) = \infty.
\]
As a consequence, although we have $\lambda = 0$ (asymptotic independence), the Ledford--Tawn index is equal to $\eta = 1$; likewise, $\bar{\chi} = 1$.
\end{proposition}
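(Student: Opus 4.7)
My plan is to combine three ingredients: (i) the known Ledford--Tawn index $\eta_\rho = (1+\rho)/2$ of the static Gaussian copula $C_\rho$, (ii) the monotonicity of $C_\rho(u,u)$ in $\rho$ (a special case of Slepian's inequality, following from the Plackett identity $\partial_\rho \Phi_\rho(a,a) = \phi_\rho(a,a) > 0$), and (iii) the assumption that $1$ is the upper endpoint of $\mu$, which forces $\mu([\rho_0, 1)) > 0$ for every $\rho_0 < 1$. The underlying idea is that by taking $\rho_0$ close enough to $1$, the static copula $C_{\rho_0}$ already has a Ledford--Tawn rate $1/\eta_{\rho_0} = 2/(1+\rho_0)$ arbitrarily close to $1$, so $\lambda_{\rho_0}(u)$ decays slower than any power $u^\epsilon$; monotonicity then lifts this behaviour to the full mixture $\lambda(u)$.

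In detail, fix $\epsilon > 0$ and choose $\rho_0 \in (0,1)$ so that $a := (1-\rho_0)/(1+\rho_0) < \epsilon$; by the upper-endpoint hypothesis, $p := \mu([\rho_0, 1)) > 0$. By Slepian monotonicity, $\rho \mapsto C_\rho(u,u)$ is non-decreasing on $[-1,1]$, so from \eqref{E:PenLambda},
\[
  \lambda(u) \geq \int_{\rho_0}^1 \lambda_\rho(u) \, \mu(\diff\rho) \geq p \, \lambda_{\rho_0}(u).
\]
Since $\eta_{\rho_0} = (1+\rho_0)/2$ exists, the definition of $\eta$ gives
\[
  \frac{\log \lambda_{\rho_0}(u)}{\log u} = \frac{\log C_{\rho_0}(u,u)}{\log u} - 1 \longrightarrow \frac{2}{1+\rho_0} - 1 = a
\]
as $u \downarrow 0$. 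Pick any $\delta \in (a, \epsilon)$. For $u$ sufficiently small, $\log \lambda_{\rho_0}(u)/\log u < \delta$; multiplying by the negative number $\log u$ reverses the inequality and gives $\lambda_{\rho_0}(u) > u^\delta$. Hence $u^{-\epsilon}\lambda(u) \geq p \, u^{\delta - \epsilon} \to \infty$, establishing the main claim.

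For the Ledford--Tawn index, the universal bound $C(u,u) \leq u$ gives $\log C(u,u)/\log u \geq 1$, while the claim just proved, applied with $\epsilon/2$ in place of $\epsilon$, yields $C(u,u) = u\,\lambda(u) \geq u^{1+\epsilon/2}$ eventually, hence $\log C(u,u)/\log u \leq 1 + \epsilon/2 + o(1)$. Letting $\epsilon \downarrow 0$ yields $\eta = 1$, and $\bar\chi = 2\eta - 1 = 1$ is then automatic.

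The main obstacle is conceptual rather than technical: one must recognise that, although the integrand $\lambda_\rho(u)$ vanishes at $u=0$ for every $\rho < 1$, the overall decay of the mixture is controlled by the mass of $\mu$ arbitrarily close to the ``phase-transition'' value $\rho = 1$, where the static Gaussian copula is on the verge of becoming asymptotically dependent. Once this is recognised, the proof reduces to a one-line application of Slepian monotonicity followed by a direct unpacking of the definition of $\eta_{\rho_0}$.
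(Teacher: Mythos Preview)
Your argument is correct. The route, however, differs from the paper's in an instructive way.

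The paper works directly with the stochastic representation \eqref{E:Gaussian:repr}: it bounds the joint tail from below by the probability of the sub-event $\{\rho>\rho_0,\ Z>0,\ \rho_0 X>\Phi^{-1}(1-u)\}$, obtaining
\[
  C(u,u)\ \ge\ \tfrac12\,\Pr(\rho>\rho_0)\,\bPhi\bigl(\Phi^{-1}(1-u)/\rho_0\bigr),
\]
and then invokes Mill's ratio to show that the right-hand side behaves like $u^{(1+o(1))/\rho_0^2}$, which beats $u^{1+\eps}$ once $1/\rho_0^2<1+\eps$. Your argument instead (i) invokes Slepian/Plackett monotonicity to reduce the mixture to a single $\rho_0$, and (ii) quotes the exact Ledford--Tawn index $\eta_{\rho_0}=(1+\rho_0)/2$ of the static Gaussian copula as a black box, extracting the sharper exponent $2/(1+\rho_0)$ rather than the cruder $1/\rho_0^2$. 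Both exponents tend to $1$ as $\rho_0\uparrow 1$, so either suffices.

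What each buys: the paper's proof is fully self-contained, requiring only Mill's ratio and no prior knowledge of $\eta_\rho$. Your proof is shorter and more conceptual, and the mechanism it isolates---monotonicity in $\rho$ plus $\eta_\rho\to 1$ as $\rho$ approaches the asymptotic-dependence boundary---would transfer verbatim to any other one-parameter family with those two properties, not just the Gaussian. Since the paper already states $\eta_\rho=(1+\rho)/2$ just before the proposition, your reliance on it is entirely legitimate in context.
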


According to Proposition~\ref{P:CR}, the rate of decay of $\lambda(u)$ to $\lambda = 0$ is slower than any positive power of $u$. So even though the tails of $C$ are asymptotically independent, we are as close as one can get to the case of tail dependence. Therefore, we coin this type of tail behavior \emph{near asymptotic dependence}. As far as we know, this situation has not yet been encountered in the literature; see for instance the extensive list of examples in \cite{H00}. Note that the function $L(u)$ in \eqref{E:LT} is equal to $\lambda(u)$.

\begin{figure}
\begin{center}
\includegraphics[width=1\textwidth]{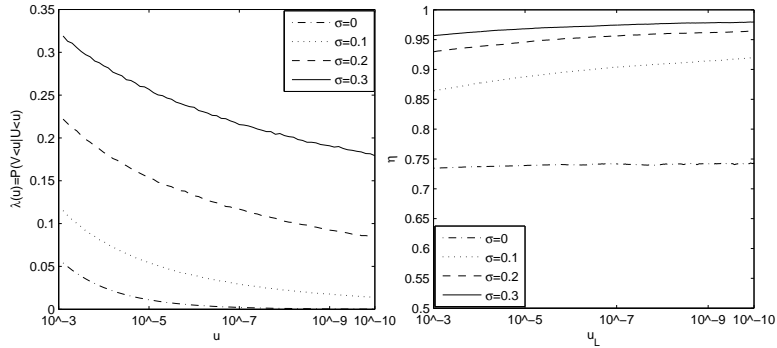}
\end{center}
\caption{\it The coefficients $\lambda(u)$ and $\eta$ for correlation mixtures of the Gaussian copula.}\label{fig2}
\end{figure}

By way of example, the left-hand panel in Figure~\ref{fig2} shows $\lambda(u)$ for tail probabilities $u$ ranging from $10^{-3}$ to $10^{-10}$,  the correlation parameter being driven by the SCAR model \eqref{SCAR} for various choices of the parameters, but always with $\bar{\rho} = 0.5$. Especially when the variation in $\rho$ is large, the coefficient $\lambda(u)$ remains quite sizeable even very far in the tail. To investigate the speed of decline of $\lambda(u)$ numerically, we write
\[
  \log \lambda(u) = \beta \, \log u + \log L(u)
\]
with $\beta = 1/\eta - 1$ and we treat $\log L(u)$ as a constant, its rate of change being much slower than the one of $\log u$. The slope $\beta$ can be estimated by a simple least squares regression, from which $\eta$ can be recovered. We estimated $\eta$ in this way by making $u$ vary over intervals $[u_L, u_U]=[u^{-k}, u^{-k-3}]$ for $k$ ranging from $3$ to $10$ with steps of size $0.01$. The corresponding estimates of $\eta$ as a function of $u_L$ are depicted in the right-hand panel Figure \ref{fig2}. For the constant correlation case we find indeed $\eta \approx (1 + \rho)/2 = 0.75$. In contrast, when correlation is not constant, $\eta$ seems to be converging to $1$ as $u \downarrow 0$.

\section{Consequences for modeling}
\label{Sec:CM}

In this section we illustrate the consequences of our results for modeling data with static versus dynamic \textit{t}-copulas. Of particular interest is the effect on inference on tail dependence when correlation is falsely assumed to be constant. To fit a static \textit{t}-copula, we estimate $\nu$ by maximum likelihood and $\rho$ by the method of moments, the likelihood function for $\nu$ being given by
\begin{align}
L(\nu \mid u, v) = \boldsymbol{t}_{\nu}\bigl(t_{\nu}^{-1}(u), \, t_{\nu}^{-1}(v), \hat{\rho} \bigr),
\end{align}
where $\boldsymbol{t}$ denotes the density of the bivariate \textit{t}-distribution and $\hat{\rho}$ is the empirical correlation of the pairs $\bigl(t_{\nu}^{-1}(U_i), \, t_{\nu}^{-1}(V_i) \bigr)$.

\subsection{Simulation study}

We simulated data from a \textit{t}-copula with a fixed degrees-of-freedom parameter $\nu$ but stochastic correlations. The correlations were drawn from the SCAR process in \eqref{SCAR}. We chose $\beta=0.97$ and $\sigma=0.05, 0.1, 0.15, 0.2$, whereas $\alpha$ was chosen such that the unconditional correlation was in all cases equal to $\bar{\rho} = 0.5$. The values for the degrees-of-freedom parameter $\nu$ were $5$, $10$, $50$, and $\infty$, the last case yielding the Gaussian copula. The sample size and the number of Monte Carlo replications were chosen to be equal to $1000$.  For each of the $1000$ simulated data sets we fitted a static \textit{t}-copula and computed both the implied coefficient of tail dependence $\lambda_{\hat{\nu}, \hat{\rho}}$ as well as the one of penultimate tail dependence $\lambda_{\hat{\nu},\hat{\rho}}(u)$ at $u = 0.01$, a tail probability of potential practical interest.

\begin{table}[t]
  \centering
  \caption{Bias estimating $\lambda(u)$ and $\lambda$ using \textit{t}-copulas}\label{bias_mc}
    \begin{tabular}{lcccc}
\hline\hline
    $\lambda(u)$  & $\sigma=0.05$ & $\sigma=0.1$ & $\sigma=0.15$ & $\sigma=0.2$ \\
    $\nu=\phantom{0}5$  & -0.011 & -0.001 & 0.002 & -0.022 \\
    $\nu=10$ & -0.027 & 0.014 & 0.022 & 0.008 \\
    $\nu=20$ & -0.082 & 0.005 & 0.029 & 0.031 \\
    $\nu=\infty$ & -0.096 & 0.000 & 0.034 & 0.030 \\
    \hline
    $\lambda$     &       &       &       &  \\
    $\nu=\phantom{0}5$  & 0.002 & 0.029 & 0.029 & 0.012 \\
    $\nu=10$ & 0.027 & 0.081 & 0.107 & 0.097 \\
    $\nu=20$ & 0.039 & 0.142 & 0.213 & 0.235 \\
    $\nu=\infty$ & 0.022 & 0.131 & 0.240 & 0.303 \\
\hline\hline\multicolumn{5}{p{9cm}}{\footnotesize \textbf{Note.} This table reports on the bias for $\lambda$ and $\lambda(0.01)$ for \textit{t}-copulas
when the correlation is assumed to be constant when they are in fact generated by the SCAR model~\eqref{SCAR}. The sample size and the number of Monte Carlo
replications are equal to 1000.}
 \end{tabular}
\end{table}

The bias for the estimates of $\lambda$ and $\lambda(u)$ is reported in Table~\ref{bias_mc}. Overall, $\lambda(u)$ is estimated much more accurately than $\lambda$ and both are estimated best when $\nu$ is low. The bias generally increases in $\nu$ and for $\lambda$ it also increases in the volatility $\sigma$ of the correlation process. 
Notably, the estimates suggest quite strong tail dependence even when the true conditional copula is Gaussian. The estimates of $\rho$ and $\nu$ (not shown) show that the positive bias in $\lambda$ is due a negative bias in $\nu$, whereas the overall correlation $\bar{\rho} = 0.5$ is only slightly underestimated with average estimates ranging from $0.48$ to $0.5$. It seems as though the estimate of the degrees-of-freedom parameter $\nu$ is such that the implied coefficient of penultimate tail dependence $\lambda(0.01)$ matches the true one and that the consequence of this is a severe underestimation of the coefficient of tail dependence $\lambda$.

These findings continued to hold for other sample sizes and other correlation-driving processes.

\subsection{Application to stock market and exchange rate returns}

We consider international stock market returns at daily and monthly frequencies and daily exchange rate returns. The data sets are daily returns of the Dow Jones industrial index (DJ) and the NASDAQ composite index (NQ) from March 26, 1990 until March 23, 2000, daily returns of the MSCI index for France (Fra) and Germany (Ger) from October 6, 1989 until October 17, 2008, monthly returns of the Datastream stock index for Germany, Japan (Jap), the UK and the US from January 1974 until May 2008, and daily exchange rate returns of the Euro (EUR), British pound (GBP) and Japanes Yen (JPY) against the US dollar from January 1, 2005 until December 31, 2008. Returns are calculated as 100 times the first difference of the natural logarithm of prices.

To model the marginal distributions, we opt for the stochastic volatility model \cite{C73, T86} because of its natural connection to the SCAR model for the correlation dynamics \eqref{SCAR}. The basic stochastic volatility model for the return $r_{t}$ at time $t=1,\ldots,T$ is given by
\begin{align*}
  r_{t} &= \exp(h_{t}/2) \, \epsilon_{t}, &
  h_{t} &= \alpha_h + \beta_h \, h_{t-1} + \tau \, \eta_{t},
  \end{align*}
where $\epsilon_{t}$ and $\eta_{t}$ are independent standard normal random variables, uncorrelated with the innovations driving the dependence process \eqref{SCAR}. Estimation of the model is done by simulated maximum likelihood using the efficient importance sampler \cite{LR03}. The static \textit{t}-copulas are fitted as described above, whereas for the time-varying model we condition the estimate of $\nu$ on the correlations $\hat{\rho}_t$ estimated using the SCAR model, yielding estimates $\hat{\rho}_t$ and $\hat{\nu}|\hat{\rho}_t$ respectively.

We also compute the implied coefficient of tail dependence $\lambda$ and the coefficient of penultimate tail dependence $\lambda(u)$ at various economically interesting levels. In particular, the levels we consider correspond to exceedances that are expected to occur once a year, once a decade and once a century. We denote the corresponding estimates by $\lambda_{\text{year}}$, $\lambda_{\text{dec}}$ and $\lambda_{\text{cent}}$. These can be interpreted as the probabilities that one market makes a certain large downward movement conditionally on the other market doing the same. The coefficient of tail dependence $\lambda$, on the other hand, denotes the probability of one market crashing completely, i.e.\ dropping to a level of zero, conditional on the same event for the other market. The latter scenario is economically rather unrealistic and is, in our view, relevant only for individual stocks, not for complete markets.

The results can be found at the end in Table~\ref{T:res}. The most striking finding is that the estimated degrees-of-freedom parameter $\nu$ is significantly larger when correlations are allowed to be stochastic, and in many cases even virtually infinity, corresponding to the Gaussian copula (we report $\infty$ whenever the upper bound of $400$ in the optimization routine was obtained). This implies that a large part of the fat-tailedness can be captured by random correlations. For the estimated measures of tail dependence, two things are notable. First, the penultimate version $\lambda(u)$ is much larger than the limiting one $\lambda$. This suggests that $\lambda$ may be a too optimistic measure for assessing the risk of spillovers of large downward movements across financial markets. Second, although the fitted models that allow for 
time-varying correlations have a lower limiting coefficient of tail dependence than the static ones, at practically relevant quantiles these models capture the dependence in the tails of the distribution quite well. So just like in the simulation above, the static and the dynamic models both do a good job in matching penultimate tail dependence, but the static \textit{t}-copulas may lead to overestimation of the coefficient of tail dependence $\lambda$.

\section{Conclusions}\label{Sec:concl}

We have studied tail dependence properties of correlation mixtures of elliptical copulas, a situation which occurs when the correlation parameter is itself driven by a latent random process. We have shown that the coefficient of (penultimate) tail dependence is larger than for ordinary elliptical copulas with the same unconditional correlation. Furthermore, for Gaussian copulas and \textit{t}-copulas, tail dependence at sub-asymptotic levels quantiles can be substantially larger than in the limit. In a simulation study we found that ignoring the dynamic nature of correlations when estimating \textit{t}-copulas leads to biased estimates of the coefficient of tail dependence. Our empirical application showed that estimates of the degrees-of-freedom parameter of a \textit{t}-copula are much lower when assuming a static correlation than when conditioning on dynamic correlations. At the same time the models based on dynamic correlations produce similar dependence in the tails at economically relevant quantiles, but lower tail dependence in the limit.

A notable discovery was that under some fairly weak conditions, correlation mixtures of Gaussian copulas have tails that are so close to being asymptotically dependent that they give rise to the newly defined category of \defn{near asymptotic dependence}. In practice this implies that it is virtually impossible to distinguish such copulas from ones that have asymptotically dependent tails such as \textit{t}-copulas. 

These findings suggest that for practical purposes the Gaussian copula is more attractive than as often stated in the literature, as long as one accounts for the (empirically observed) fact that correlations vary over time. This finding can be seen as an analogue to the effect that conditionally Gaussian models for time-varying
volatility such as GARCH and stochastic volatility models can create heavy tails in the margins. Thus, conditionally Gaussian models are more than just a simplifying approximation in a multivariate setting, as they are able to capture the tails both in the margins as in the copula.

\section*{Acknowledgments}

We would like to thank the participants of the ``Workshop on Copula Theory and Its Applications'' (Warsaw, 2009) for comments. The first author gratefully acknowledges a PhD traveling grant from METEOR. The second authors' research was supported by IAP research network grant nr.\ P6/03 of the Belgian government (Belgian Science Policy) and by contract nr.\ 07/12/002 of the Projet d'Actions de Recherche Concert\'ees of the Communaut\'e fran\c{c}aise de Belgique, granted by the Acad\'emie universitaire Louvain.

\begin{appendix}
\section{Proofs}
\begin{proof}[Proof of Proposition~\ref{P:penlambda:convex}]
Write $\rho = \sin \gamma \in [-1, 1]$ for $\gamma = \arcsin \rho
\in [-\pit, \pit]$. Then $(1 - \rho^2)^{1/2} = \cos \gamma \in [0,
1]$, so that
\[
  Y 
  = \rho S_1 + (1 - \rho^2)^{1/2} S_2
  = R \, \sin \gamma \cos \Theta + R \, \cos \gamma \sin \Theta
  = R \, \sin(\Theta + \gamma),
\]
yielding the representation
\[
  (X, Y) = \bigl(R \, \cos \Theta, \, R \, \sin(\Theta + \gamma) \bigr).
\]
Let $t \ge 0$ be such that $1 - F(t) = F(-t) = u$, where $F$ is the
marginal distribution function of $X$ and $Y$. Since the
distribution of $(X, Y)$ is symmetric around zero and since $R$ and
$\Theta$ are independent,
\[
  C_\rho(u, u)
  = \Pr(X > t, Y > t) 
  = \int_t^\infty \Pr\biggl[ \cos \Theta > \frac{t}{r}, \sin(\Theta + \gamma) > \frac{t}{r} \biggr] \, \diff F_R(r),
\]
with $F_R$ the distribution function of $R$. As a consequence, it is
sufficient to show that for fixed $z \in [0, 1)$, the function
\[
  \rho \mapsto \Pr[ \cos \Theta > z, \sin(\Theta + \gamma) > z]
\]
is convex in $\rho \in [0, 1]$, where $\gamma = \arcsin \rho \in [0,
\pit]$ and with $\Theta$ uniformly distributed on $(-\pi, \pi)$.
Write $z = \cos \alpha \in [0, 1)$, so $\alpha = \arccos z \in (0,
\pi]$. Then
\[
  \cos \Theta > \cos \alpha \Longleftrightarrow  - \alpha < \Theta < \alpha
\]
whereas
\[
  \sin (\Theta + \gamma) > \cos \alpha = \sin(\pit - \alpha) \Longleftrightarrow \pit - \alpha < \Theta + \gamma < \pit + \alpha.
\]
Joining these two double inequalities and using the fact that $\pit
- \gamma \ge 0$ yields
\[
  \Pr[ \cos \Theta > z, \sin(\Theta + \gamma) > z ]
  = \Pr(- \alpha + \pit - \gamma < \Theta < \alpha)
  = \frac{1}{2\pi} \max(2\alpha - \pit + \gamma, 0).
\]
Since $\gamma = \arcsin \rho$ is convex in $\rho \in [0, 1]$, the
result follows.
\end{proof}

\begin{proof}[Proof of Proposition~\ref{P:penlambda:convex2}]
We keep the same notations as in the proof of
Proposition~\ref{P:penlambda:convex}. Now let $0 < u < 1/2$ so that
$t = F^{-1}(1-u) > 0$. We have
\[
  C_\rho(u, u)
  = \int_t^\infty \frac{1}{2\pi} \max(2\alpha - \pit + \gamma, 0) \, \diff F_R(r)
\]
where $\alpha = \arccos(t/r)$ and $\gamma = \gamma(\rho) = \arcsin
\rho$. Observe that
\[
  2\alpha - \pit + \gamma > 0 \Longleftrightarrow r > \frac{t}{\cos \bigl( (\pit - \gamma)/2 \bigr)} = r(\rho).
\]
We find
\begin{align*}
  2\pi \, \frac{\diff}{\diff\rho} C_\rho(u, u) &= \overline{F}_R \bigl( r(\rho) \bigr) \, \frac{\diff \gamma(\rho)}{\diff\rho}, \\
  2\pi \, \frac{\diff^2}{\diff\rho^2} C_\rho(u, u)
  &= - f_R\bigl( r(\rho) \bigr) \, \frac{\diff r(\rho)}{\diff \rho} \, \frac{\diff \gamma(\rho)}{\diff \rho}
  + \overline{F}_R \bigl( r(\rho) \bigr) \, \frac{\diff^2 \gamma(\rho)}{\diff\rho^2}.
\end{align*}
We have
\begin{align*}
  \frac{\diff \gamma(\rho)}{\diff\rho} &= (1 - \rho^2)^{-1/2}, &
  \frac{\diff^2 \gamma(\rho)}{\diff\rho^2} &= \frac{\rho}{1 - \rho^2} \, \frac{\diff \gamma(\rho)}{\diff\rho}.
\end{align*}
Some goniometric juggling yields $\cos \{ (\pit - \gamma) / 2 \} = \{ (1 + \rho) / 2 \}^{1/2}$, whence
\begin{align*}
  r(\rho) &= 2^{1/2} t \, (1+\rho)^{-1/2}, &
  \frac{\diff r(\rho)}{\diff \rho} &= - \frac{1}{2} \, \frac{1}{1+\rho} \, r(\rho).
\end{align*}
Writing $\nu(r) = r \, f_R(r) / \overline{F}(r)$, we conclude that
\begin{align*}
  2\pi \, \frac{\diff^2}{\diff\rho^2} C_\rho(u, u) = \overline{F} \bigl( r(\rho) \bigr) \, \frac{\diff \gamma(\rho)}{\diff \rho} \, \frac{1}{1+\rho}
  \biggl( \frac{1}{2} \, \nu \bigl( r(\rho) \bigr) + \frac{\rho}{1 - \rho} \biggr)
\end{align*}
For $\rho < 0$, we have $r(\rho) > t$. If $u$ is small enough so
that $t$ is large enough so that $\nu(r) > 1$ for all $r \ge t$,
then the factor between big brackets on the right-hand side of the
last display is positive for all $\rho \in (-1, 1)$. Hence
$C_\rho(u, u)$ is convex in $\rho$.
\end{proof}
\begin{proof}[Proof of Proposition~\ref{P:PUT}]
Let $x > 0$. We have
\begin{align*}
  \Pr(X > x, \, Y > x)
  &= \Pr \bigl( S^{-1} Z_1 > x, \, S^{-1} ( \rho Z_1 + {\textstyle\sqrt{1 - \rho^2}} Z_2 ) > x \bigr) \\
  &= \Pr \bigl( \min(Z_1, \rho Z_1 + \textstyle{\sqrt{1 - \rho^2}} Z_2) S^{-1} > x \bigr).
\end{align*}
Put $W = \min(Z_1, \rho Z_1 + \sqrt{1 - \rho^2} Z_2)$ and $W_+ =
\max(W, 0)$. Then we can rewrite the above equation as $\Pr(X > x, \, Y > x) = \Pr( W_+ S^{-1} > x )$. We need precise information on the upper tail of $S^{-1}$: see
Lemma~\ref{L:S}. We can then proceed as follows:
\begin{align*}
  \lefteqn{
  \Pr(X > x, \, Y > x) = \int_{(0, \infty)} \Pr(S^{-1} > x/w) \, d\Pr(W \le w)
  } \\
  &= \int_{(0, \infty)} a_\nu \, (x/w)^{-\nu} \, \bigl( 1 - b_\nu \, (x/w)^{-2} \bigr) \, d\Pr(W \le w) \\
  &\qquad + \int_{(0, \infty)} \Delta(x/w) \, d\Pr(W \le w) \\
  &= a_\nu \, x^{-\nu} \, \bigl( \E[W_+^\nu] - b_\nu \, x^{-2} \, \E[W_+^{\nu+2}] \bigr)
  + \int_{(0, \infty)} \Delta(x/w) \, d\Pr(W \le w).
\end{align*}
The remainder term is
\[
  0 \le \int_{(0, \infty)} \Delta(x/w) \, d\Pr(W \le w) \le c_\gamma \, x^{-\nu-4} \, \E[W_+^{\nu+4}].
\]
We find that, as $x \to \infty$,
\[
  \Pr(X > x, \, Y > x)
  = a_\nu \, x^{-\nu} \, \bigl( \E[W_+^\nu] - b_\nu \, x^{-2} \, \E[W_+^{\nu+2}] + O(x^{-4}) \bigr).
\]

The marginal tail of $X$ can be represented in the same way: it
suffices to replace $W$ in the preceding display by a standard
normal random variable $Z$, that is, as $x \to \infty$,
\[
  \Pr(X > x) = \Pr(S^{-1} Z > x) = a_\nu \, x^{-\nu} \, \bigl( \E[Z_+^\nu] - b_\nu \, x^{-2} \, \E[Z_+^{\nu+2}] + O(x^{-4}) \bigr).
\]
It follows that as $x \to \infty$,
\begin{align}
\label{E:bivtmix:tail}
  \lefteqn{
  \Pr(Y > x \mid X > x) = \frac{\Pr(X > x, \, Y > x)}{\Pr(X > x)}
  } \nonumber \\
  &= \frac{\E[W_+^\nu] - b_\nu \, x^{-2} \, \E[W_+^{\nu+2}] + O(x^{-4})}{\E[Z_+^\nu] - b_\nu \, x^{-2} \, \E[Z_+^{\nu+2}] + O(x^{-4})} \nonumber \\
  &= \frac{\E[W_+^\nu]}{\E[Z_+^\nu]} + b_\nu \, x^{-2} \, \frac{\E[W_+^\nu] \E[Z_+^{\nu+2}] - \E[W_+^{\nu+2}] \E[Z_+^\nu]}{\bigl(\E[Z_+^\nu]\bigr)^2} + O(x^{-4}).
\end{align}
As a consequence, the coefficient of tail dependence is given by
\begin{equation}
\label{E:bivtmix:lambda}
  \expec[\lambda_{\nu, \rho}] = \lambda = \lim_{x \to \infty} \Pr(Y > x \mid X > x) = \frac{\E[W_+^\nu]}{\E[Z_+^\nu]}.
\end{equation}

Moreover, the expansion in \eqref{E:bivtmix:tail} gives us a handle
on the rate of convergence of $\lambda(u)$ towards $\lambda$. Let
$t_\nu^{-1}$ be the quantile function of the \textit{t}-distribution
with $\nu$ degrees of freedom. By symmetry of the upper and lower
tails,
\begin{multline*}
  \lambda(u)
  = \Pr \bigl( Y > t_\nu^{-1}(1-u) \mid X > t_\nu^{-1}(1-u) \bigr) \\
  = \lambda + b_\nu \, \frac{\E[W_+^\nu] \E[Z_+^{\nu+2}] - \E[W_+^{\nu+2}] \E[Z_+^\nu]}{\bigl(\E[Z_+^\nu]\bigr)^2} \,
  \bigl(t_\nu^{-1}(1-u)\bigr)^{-2} \\
  + O \Bigl( \bigl(t_\nu^{-1}(1-u)\bigr)^{-4} \Bigr)
\end{multline*}
Since $1 - t_\nu(x) = \Pr(X > x) \sim a_\nu \, x^{-\nu} \,
\E[Z_+^\nu]$ as $x \to \infty$, we have
\[
  t_\nu^{-1}(1 - u) \sim \bigl( a_\nu \, \E[Z_+^\nu] \bigr)^{1/\nu} u^{-1/\nu}, \qquad u \downarrow 0.
\]
(By $f(y) \sim g(y)$ we mean that $f(y) / g(y) \to 1$.) We obtain $\lambda(u) = \lambda + \gamma \, u^{2/\nu} + o(u^{2/\nu})$ with
\[
  \gamma = b_\nu \, \frac{\E[W_+^\nu] \E[Z_+^{\nu+2}] - \E[W_+^{\nu+2}] \E[Z_+^\nu]}{\bigl(\E[Z_+^\nu]\bigr)^2} \, \bigl( a_\nu \, \E[Z_+^\nu] \bigr)^{-2/\nu}.
\]
Equating \eqref{E:bivtmix:lambda} and \eqref{E:lambda} (for
non-random $\rho$) we find
\[
  \expec[W_+^\nu] = \expec[Z_+^\nu] \, \expec[\lambda_{\nu, \rho}],
\]
the latter expectation being with respect to the random variable
$\rho$. As a consequence,
\[
  \frac{\E[W_+^\nu] \E[Z_+^{\nu+2}] - \E[W_+^{\nu+2}] \E[Z_+^\nu]}{\bigl(\E[Z_+^\nu]\bigr)^2}
  = \frac{\E[Z_+^{\nu+2}]}{\E[Z_+^\nu]} \expec[\lambda_{\nu, \rho} - \lambda_{\nu+2, \rho}],
\]
the sign of which is positive, for $\lambda_{\nu, \rho}$ is decreasing in $\nu$. Furthermore, by~\eqref{E:Znu} below and the identity
$\Gamma(z+1) = z \, \Gamma(z)$,
\[
  \frac{\E[Z_+^{\nu+2}]}{\E[Z_+^\nu]}
  = \frac{\frac{2^{\nu/2}}{\sqrt{\pi}} \, \Gamma \bigl( (\nu+3)/2 \bigr)}{\frac{2^{\nu/2-1}}{\sqrt{\pi}} \, \Gamma \bigl( (\nu+1)/2 \bigr)}
  = 2 \, \frac{\nu+1}{2} = \nu + 1.
\]
We obtain
\begin{equation}
\label{E:bivtmix:gamma}
  \gamma = \bigl( a_\nu \, \E[Z_+^\nu] \bigr)^{-2/\nu} \, b_\nu \, (\nu + 1) \, \E[\lambda_{\nu, \rho} - \lambda_{\nu+2, \rho}].
\end{equation}
with $a_\nu$ and $b_\nu$ given in \eqref{E:anubnu} and 
\begin{equation}
\label{E:Znu}
  \E[Z_+^\nu] = \frac{1}{\sqrt{2\pi}} \int_0^\infty z^\nu \, e^{-z^2/2} \, dz = \frac{2^{\nu/2-1}}{\sqrt{\pi}} \, \Gamma \bigl( (\nu+1)/2 \bigr).
\end{equation}
\end{proof}

\begin{lemma}
\label{L:S} For $\nu > 0$, let $S$ be a positive random variable
such that $\nu S^2$ has a chi-square distribution with degrees of
freedom $\nu$. Then for all $z > 0$,
\[
  \Pr(S^{-1} > z) = a_\nu \, z^{-\nu} \bigl(1 - b_\nu \, z^{-2}\bigr) + \Delta(z)
\]
where
\begin{align}
\label{E:anubnu}
  a_\nu &= \frac{(\nu/2)^{\nu/2}}{\Gamma(\nu/2+1)}, & \qquad b_\nu &= \frac{(\nu/2)^2}{\nu/2+1},
\end{align}
and $0 \le \Delta(z) \le c_\gamma \, z^{-\nu-4}$ for some positive
constant $c_\nu$.
\end{lemma}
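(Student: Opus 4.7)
The plan is to reduce the lemma to a short Taylor expansion of the gamma density of $S^2$. Since $S>0$, the event $\{S^{-1}>z\}$ coincides with $\{S^2\le z^{-2}\}$, and from $\nu S^2 \sim \chi^2_\nu$ one obtains by an elementary change of variables that $S^2$ has density
\[
  f_{S^2}(t) = \frac{(\nu/2)^{\nu/2}}{\Gamma(\nu/2)} \, t^{\nu/2-1} \, e^{-\nu t/2}, \qquad t>0.
\]
Hence, setting $w = z^{-2}$,
\[
  \Pr(S^{-1}>z) = \frac{(\nu/2)^{\nu/2}}{\Gamma(\nu/2)} \int_0^w t^{\nu/2-1} \, e^{-\nu t/2} \, dt.
\]

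Next, I would use the sandwich bound
\[
  1 - x \,\le\, e^{-x} \,\le\, 1 - x + \tfrac{1}{2} x^2, \qquad x \ge 0,
\]
which is an easy convexity/second-derivative argument (at $x=0$ both auxiliary functions vanish together with their first derivative, and one checks the sign of the second derivative). Applied with $x = \nu t/2$ inside the integrand, this gives a lower bound
\[
  \int_0^w t^{\nu/2-1}\bigl(1 - \nu t/2\bigr)\,dt \;=\; \frac{w^{\nu/2}}{\nu/2} \, \biggl( 1 - \frac{(\nu/2)^2}{\nu/2+1}\,w \biggr)
\]
and an upper bound that adds a nonnegative correction proportional to $w^{\nu/2+2}$. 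Multiplying by the prefactor $(\nu/2)^{\nu/2}/\Gamma(\nu/2)$ and using $(\nu/2)\Gamma(\nu/2) = \Gamma(\nu/2+1)$, the leading coefficient collapses exactly to $a_\nu$ and the next-order coefficient to $a_\nu b_\nu$, while the remainder term becomes $c_\nu w^{\nu/2+2} = c_\nu z^{-\nu-4}$ with $c_\nu = (\nu/2)^{\nu/2+2}/\bigl(2\,\Gamma(\nu/2)\,(\nu/2+2)\bigr)$.

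Defining $\Delta(z)$ as the difference between $\Pr(S^{-1}>z)$ and $a_\nu z^{-\nu}(1-b_\nu z^{-2})$, the lower bound yields $\Delta(z) \ge 0$ and the upper bound yields $\Delta(z) \le c_\nu z^{-\nu-4}$, completing the proof. There is no real obstacle here; the only point one has to be slightly careful about is that both inequalities on $e^{-x}$ must hold for \emph{all} $x \ge 0$ (not just small $x$), so that the sandwich applies uniformly on the interval of integration for every $z>0$, and this is exactly what the elementary convexity argument delivers.
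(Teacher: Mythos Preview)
Your proof is correct and follows essentially the same route as the paper: both express $\Pr(S^{-1}>z)$ as an incomplete gamma integral, sandwich $e^{-x}$ between $1-x$ and $1-x+x^2/2$ for $x\ge 0$, and integrate term by term to extract $a_\nu$, $b_\nu$ and the $O(z^{-\nu-4})$ remainder. The only cosmetic difference is that the paper integrates in the variable $y=\nu S^2$ using the $\chi^2_\nu$ density, whereas you first pass to the density of $S^2$ and integrate in $t$; after the obvious substitution $y=\nu t$ the two arguments coincide line by line.
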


\begin{proof}
Since $\nu S^2$ is chi-squared with $\nu$ degrees of freedom,
\[
  \Pr(S^{-1} > z)
  = \Pr( \nu S^2 < \nu z^{-2})
  = \int_0^{\nu z^{-2}} \frac{1}{\Gamma(\nu/2) \, 2^{\nu/2}} y^{\nu/2 - 1} \, e^{-y/2} \, dy.
\]
For $z \ge 0$, we have $e^{-z} - (1 - z) = \int_0^z (1 - e^{-y}) \,
dy = \int_0^z \int_0^y e^{-x} \, dx \, dy$ and thus $0 \le e^{-z} -
(1-z) \le z^2/2$. We find that
\begin{align*}
  0
  &\le \Pr(S^{-1} > z) - \frac{1}{\Gamma(\nu/2) \, 2^{\nu/2}}  \int_0^{\nu z^{-2}} y^{\nu/2 - 1} \, (1 - y/2) \, dy \\
  &\le \frac{1}{\Gamma(\nu/2) \, 2^{\nu/2}}  \int_0^{\nu z^{-2}} y^{\nu/2 - 1} \, \frac{(y/2)^2}{2} \, dy
  = \text{constant} \, z^{-\nu - 4}
\end{align*}
the constant depending on $\nu$. The integral can be computed as
follows:
\begin{multline*}
  \frac{1}{\Gamma(\nu/2) \, 2^{\nu/2}}  \int_0^{\nu z^{-2}} y^{\nu/2 - 1} \, (1 - y/2) \, dy \\
  = \frac{1}{\Gamma(\nu/2) \, 2^{\nu/2}}  \biggl( \frac{(\nu z^{-2})^{\nu/2}}{\nu/2} - \frac{1}{2} \frac{(\nu z^{-2})^{\nu/2+1}}{\nu/2+1} \biggr) 
  = a_\nu \, z^{-\nu} \, \bigl( 1 - b_\nu z^{-2} \bigr)
\end{multline*}
with $a_\nu$ and $b_\nu$ as in \eqref{E:anubnu}.
\end{proof}

\begin{proof}[Proof of Proposition~\ref{P:CR}]
Fix $\eps > 0$. We have to prove that $u^{1+\eps} = o\bigl( C(u, u)\bigr)$ as $u \downarrow 0$. Recall the representation $(X, Y) = \bigl( X, \rho X + (1 - \rho^2)^{1/2} Z \bigr)$ in \eqref{E:Gaussian:repr}. Fix $0 < \rho_0 < 1$. By symmetry, if $0 < u < 1/2$,
\begin{align*}
    C(u, u)
    &= \Pr[X > \Phi^{-1}(1-u), \, \rho X + \sqrt{1-\rho^2} Z > \Phi^{-1}(1-u)] \\
    &\ge \Pr[X > \Phi^{-1}(1-u), \, \rho_0 X > \Phi^{-1}(1-u), \, \rho > \rho_0, \, Z > 0] \\
    &= \frac{1}{2} \, \Pr(\rho > \rho_0) \, \Pr[X > \Phi^{-1}(1-u)/\rho_0].
\end{align*}
Let $\varphi(x) = (2\pi)^{-1/2} \exp(-x^2/2)$ denote the standard normal density function and write $\bPhi = 1-\Phi$. From Mill's ratio, $\bPhi(x) = \bigl(1 + o(1)\bigr) \, x^{-1} \, \varphi(x)$ as $x \to \infty$, it follows that $\Phi^{-1}(1-u) = \bigl(1 + o(1)\bigr) \, (-2\log u)^{1/2}$ as $u \downarrow 0$ and therefore
\begin{align*}
    \Pr[X > \Phi^{-1}(1-u)/\rho_0]
    &= \bPhi \bigl( \Phi^{-1}(1-u)/\rho_0 \bigr) \\
    &= \bigl(1 + o(1)\bigr) \frac{\varphi \bigl( \Phi^{-1}(1-u)/\rho_0 \bigr)}{\Phi^{-1}(1-u)/\rho_0} \\
    &= \bigl(1 + o(1)\bigr) (2\pi)^{-1/2} \rho_0 (-2\log u)^{-1/2} u^{(1 + o(1))/\rho_0^2}.
\end{align*}
If $\rho_0 \in (0, 1)$ is chosen such that $1/\rho_0^2 < 1+\eps$, then $\Pr[X > \Phi^{-1}(1-u)/\rho_0]$ is indeed of larger order than $u^{1+\eps}$,
as required.
\end{proof}

\end{appendix}

\section*{References}
\bibliographystyle{elsarticle-num}
\bibliography{biblio}

\begin{landscape}
\begin{table}[t]\caption{Finite level and asymptotic tail dependence of static and dynamic \textit{t}-copulas}\label{T:res}
    \begin{center}
    \begin{tabular}{lccccc @{\hspace{1cm}} ccccc}
\hline\hline
          &   \multicolumn{5}{l}{Static correlations}  &   \multicolumn{5}{l}{Dynamic correlations}   \\
Data&$\hat{\nu}$&$\lambda_{\text{year}}$&$\lambda_{\text{dec}}$&$\lambda_{\text{cent}}$&$\lambda$&$\hat{\nu}|\hat{\rho}_t$&$\lambda_{\text{year}}$&$\lambda_{\text{dec}}$&$\lambda_{\text{cent}}$&$\lambda$ \\
\hline
    Daily stock market &       &  &  &  &       &       &    &   &     &  \\
    DJ-NQ & 6.06  & 0.36  & 0.33  & 0.32  & 0.31  & $\infty$ & 0.26  & 0.18  & 0.13  & 0.00 \\
    Fra-Ger & 2.89  & 0.51  & 0.50  & 0.50  & 0.50  & 17.51 & 0.30  & 0.25  & 0.22  & 0.21 \\
    \hline
    Monthly stock market &       &       &       &       &       &       &       &       &       &  \\
    Ger-Jap & 12.39 & 0.24  & 0.11  & 0.05  & 0.02  & $\infty$ & 0.23  & 0.08  & 0.04  & 0.00 \\
    Ger-UK & 6.22  & 0.36  & 0.25  & 0.19  & 0.18  & 15.95 & 0.37  & 0.37  & 0.36  & 0.08 \\
    Ger-US & 3.93  & 0.39  & 0.31  & 0.28  & 0.27  & 11.20 & 0.35  & 0.23  & 0.15  & 0.11 \\
    Jap-UK & 10.32 & 0.26  & 0.13  & 0.07  & 0.04  & 41.03 & 0.25  & 0.10  & 0.03  & 0.00 \\
    Jap-US & 9.91  & 0.26  & 0.13  & 0.07  & 0.05  & $\infty$ & 0.26  & 0.11  & 0.06  & 0.00 \\
    UK-US & 14.85 & 0.41  & 0.25  & 0.14  & 0.07  & $\infty$ & 0.47  & 0.43  & 0.42  & 0.00 \\
    \hline
    Exchange rates &       &       &       &       &       &       &       &       &       &  \\
    EUR-GBP & 8.03  & 0.37  & 0.33  & 0.31  & 0.28  & 39.62 & 0.31  & 0.23  & 0.18  & 0.03 \\
    EUR-JPY & 5.51  & 0.20  & 0.18  & 0.17  & 0.16  & $\infty$ & 0.14  & 0.09  & 0.06  & 0.00 \\
    GBP-JPY & 4.55  & 0.17  & 0.16  & 0.15  & 0.15  & $\infty$ & 0.21  & 0.16  & 0.13  & 0.00 \\

\hline \hline \multicolumn{11}{p{17cm}}{\footnotesize 
\textbf{Note.} This table reports on the estimates of the degrees-of-freedom parameter $\nu$ of a
\textit{t}-copula conditional on constant correlation (column 2) and on time-varying
correlations driven by a SCAR process (column 7). The remaining
columns show measures of tail dependence at finite and asymptotic
quantiles. The margins are fitted via Gaussian stochastic
volatility models.}
\end{tabular}
\end{center}
\end{table}
\end{landscape}

\end{document}